\documentclass[
  a4paper, 
  reqno, 
  oneside, 
  12pt
]{amsart}

\usepackage[utf8]{inputenc}

\usepackage[dvipsnames]{xcolor} 
\colorlet{cite}{red}
\usepackage{tikz-cd}  
\usepackage{amsmath}

\usetikzlibrary{arrows,positioning}
\tikzset{ 
  baseline=-2.3pt,
  text height=1.5ex, text depth=0.25ex,
  >=stealth,
  node distance=2cm,
  mid/.style={fill=white,inner sep=2.5pt},
}

\setlength{\textwidth}{6.7in}
\setlength{\oddsidemargin}{-0.3in}
\setlength{\evensidemargin}{-0.3in}
\setlength{\textheight}{9in}
\setlength{\topmargin}{0in}

\usepackage{lmodern}
\usepackage{tikz-cd}
\usepackage{amsthm, amssymb, amsfonts}

\usepackage{amsthm, amssymb, amsfonts}	
\usepackage[all]{xy}
\usepackage{graphicx,caption,subcaption}
\usepackage{braket}  
\usepackage{microtype}
\usepackage{DotArrow}
\usepackage[makeroom]{cancel}
\usepackage[%
  bookmarks=true,			
  unicode=true,			
  pdftitle={0-shifted cosymplectic},		%
  pdfauthor={Lopez-Garcia, Valencia},	%
  pdfkeywords={precosymplectic, 0-shifted structures, reduction},	
  colorlinks=true,		
  linkcolor=black,			
  citecolor=black,		
  filecolor=magenta,		
  urlcolor=RoyalBlue			
]{hyperref}				
\usepackage{cleveref}

\newtheoremstyle{mydef}
  {}		
  {}		
  {}		
  {}		
  {\scshape}	
  {. }		
  { }		
  {\thmname{#1}\thmnumber{ #2}\thmnote{ #3}}	

\newtheorem{theorem}{Theorem}[section]
\newtheorem*{theorem*}{Theorem}
\newtheorem{proposition}[theorem]{Proposition}
\newtheorem*{proposition*}{Proposition}
\newtheorem{lemma}[theorem]{Lemma}
\newtheorem*{lemma*}{Lemma}
\newtheorem{corollary}[theorem]{Corollary}
\newtheorem*{corollary*}{Corollary}
\theoremstyle{definition}
\newtheorem{definition}[theorem]{Definition}
\newtheorem{example}[theorem]{Example}

\theoremstyle{remark}

\newtheorem*{conjecture*}{Conjecture}
\usepackage{tikz}

\DeclareMathOperator{\im}{im}

\newcommand{\rr}{\rightrightarrows}

\author{Daniel L\'opez-Garcia {\tiny and } Fabricio Valencia}
\subjclass[2020]{53D20, 57R30, 58H05}
\address{}
\date{\today}

\address{D. L\'opez-Garcia - Instituto de Matem\'atica e Estat\'istica, Universidade Federal Fluminense,  Rua Prof. Marcos Waldemar de Freitas Reis s/n, 24210-201 Niter\'oi - Brazil.  \newline  
      \phantom{xx}
       F. Valencia - Instituto de Matem\'atica e Estat\'istica, Universidade de S\~ao Paulo, Rua do Mat\~ao 1010, Cidade Universit\'aria, 05508-090 S\~ao Paulo - Brazil.
      \newline  
      \phantom{xx}
  danielflg@id.uff.br, fabricio.valencia@ime.usp.br}

\title{Hamiltonian actions on $0$-shifted cosymplectic groupoids}

\begin{document}
\maketitle

\begin{abstract}
We introduce the notion of 0-shifted cosymplectic structure on differentiable stacks and develop a corresponding moment map theory for Hamiltonian cosymplectic actions. We present a reduction procedure, establish a version of the Kirwan convexity theorem, and obtain examples of Morse–Bott Lie groupoid morphisms.
\end{abstract}

\section{Introduction}

Cosymplectic geometry can be thought of as an odd counterpart of symplectic geometry, yielding a different approach to that provided by contact geometry \cite{Libermann1,Libermann2}. One of the main advantages of working with cosymplectic structures is that they allow one to deal with time-dependent Hamiltonian systems which can not be directly approached using standard symplectic structures \cite{GuzmanMarrero}. In such a context, the geometry is described via a manifold $M$ endowed with a closed differential two-form $\omega$ and a non-vanishing closed differential one-form $\eta$, so that $TM=\ker(\omega)\oplus \ker(\eta)$. This implies that $M$ is both $(2n+1)$-dimensional and oriented since $\eta\wedge \omega^n$ turns out to be a volume form. There are situations in which one can address numerous time-dependent Hamiltonian systems after weaken the condition for the pair $(\omega, \eta)$ to determine a cosymplectic structure. Instead, we may require that $\ker(\omega)\cap \ker(\eta)$ is a regular distribution which is strictly contained in $\ker(\omega)$ and therefore it gives rise to a foliation $\mathcal{F}_{(\omega, \eta)}$ of $M$, see \cite{ChineaDeLeonMarrero}. The latter immediately implies that $(M,\omega)$ is a presymplectic manifold. As expected, the space of leaves $M/\mathcal{F}_{(\omega, \eta)}$ is in general a singular space which one can desingularize by considering a foliation groupoid integrating the Lie algebroid associated with the tangent distribution $T\mathcal{F}_{(\omega, \eta)}$. In this regard, it is generally accepted that the holonomy and the
monodromy groupoids of the foliation are actually extreme examples of the possible integrations that we can consider \cite{CrainicMoerdijk}.

Building upon the recent works \cite{HoffmanSjamaar,LinSjamaar,MaglioTortorellaVitagliano}, we introduce the notion of 0-shifted cosymplectic groupoid which plays out the role of being the global geometric object integrating the foliation associated with a precosymplectic structure. Such a notion is Morita invariant, so that it enables us to define 0-shifted cosymplectic structures on differentiable stacks. Additionally, we introduce Hamiltonian actions of Lie groups on precosymplectic manifolds and describe their main properties. This can be done by following the notion of Hamiltonian action as well as moment map in the realm of cosymplectic geometry which has been widely explored in the existing literature \cite{Albert,BazzoniGoertsches,BazzoniGoertsches2,deLucasRivasVilarinoZawora}. As an immediate consequence, we then study some aspects concerning a notion of Hamiltonian action of foliation Lie 2-groups on Lie groupoids endowed with 0-shifted cosymplectic structures, pointing out that such a study can be accomplished by applying several key results of the general theory developed in \cite{HoffmanSjamaar}. In particular, we present a corresponding Marsden--Weinstein--Meyer reduction procedure for 0-shifted cosymplectic groupoids, establish a version of the Kirwan convexity theorem, and explain why the function components of our Moment map morphisms yield examples of Morse–Bott Lie groupoid morphisms in the sense of \cite{OrtizValencia}. We also provide several examples illustrating the main results described above. It is worth mentioning that such a reduction procedure gives rise to new insights even in the setting of cosymplectic reduction by an ordinary Lie group. Besides, all these features can be used to provide a sort of Delzant's classification for what we would call a toric cosymplectic stack. In fact, this can be done by following the works \cite{BazzoniGoertsches2,Hoffman} in order to classify simple convex polytopes equipped with some additional combinatorial data.

We stress that it was recently defined a notion of cosymplectic groupoid \cite{FernandesIglesias} along with the concept of Hamiltonian action by a Lie group \cite{LopezGarciaMartinez}. Nevertheless, the main difference between the two notions lies in the fact that cosymplectic groupoids are defined in terms of multiplicative cosymplectic structures on the arrows, whereas 0-shifted cosymplectic structures are defined in terms of basic precosymplectic structures on the objects. More importantly, both notions motivate the problem of studying general $m$-shifted cosymplectic structures on Lie $n$-groupoids, something that can be accomplished by adapting the general theory developed in \cite{CuecaZhu} for the symplectic geometry case.

\vspace{.2cm}
{\bf Acknowledgments:} L\'opez-Garcia was supported by Grant 2022/04705-8 Sao Paulo Research Foundation - FAPESP. Valencia was supported by Grant 2024/14883-6 Sao Paulo Research Foundation - FAPESP.

\section{Precosymplectic Hamiltonian actions}

In this section we deal with some features about a natural notion of Hamiltonian action in the category of precosymplectic manifolds. A \emph{cosymplectic manifold} is a triple $(M,\omega,\eta)$ where $M$ is a smooth manifold, $\omega\in \Omega^2(M)$ is a closed differential 2-form, and $\eta\in \Omega^1(M)$ is a non-vanishing closed differential 1-form such that $\ker(\omega)\oplus \ker(\eta)=TM$. Equivalently, $M$ is a $(2n+1)$-dimensional manifold such that $\eta\wedge \omega^n$ defines a volume form on $M$. The pair $(\omega,\eta)$ induces a vector bundle isomorphism $\flat:TM \to T^\ast M$ which is defined by $\flat(v)=\iota_v\omega +\eta(v)\eta$. We refer to $\flat$ as the \emph{Lichnerowicz map} associated with $(\omega,\eta)$. The  \emph{Reeb vector field} of $(M,\omega,\eta)$ is defined by  $v=\flat^{-1}(\eta)$ and it is characterized by the conditions $\iota_v\omega=0$ and $\iota_v\eta=1$.  It is clear that the Lichnerowicz map $\flat$ together with a Reeb-like vector field completely characterize any cosymplectic structure \cite{Albert}.

Interesting examples of  cosymplectic manifolds can be provided by using the symplectic mapping tori construction \cite{BazzoniGoertsches2,FernandesIglesias,Li}. Namely, let $(S,\omega_S)$ be a symplectic manifold and $\varphi:S\to S$ be a symplectomorphism. Consider the quotient of $S\times \mathbb{R}$ by the action of $\mathbb{Z}$ given by $n\cdot (x,r)=(\varphi^n(x),r+n)$. Such an action is free and proper, so that the corresponding quotient space is a smooth manifold which we denote by $S_\varphi$. Additionally, there is a fiber bundle $S\to S_\varphi\xrightarrow{q} S^1$. The cosymplectic structure $(\omega,\eta)$ on $S_\varphi$ is defined by projecting the basic form $\textnormal{pr}_S^\ast (\omega_S)$ onto $S_\varphi$ and setting $\eta=q^\ast (d\theta)$, where $\theta$ stands for the coordinate direction in $S^1$. It follows that the latter construction is a one-to-one correspondence if the symplectic manifold we start with is closed \cite{Li}.

We are interested in briefly describing some features of the case in which a pair $(\omega,\eta)$ not necessarily determines a cosymplectic structure on $M$, yet satisfying a regularity condition in terms of its Lichnerowicz map. More precisely, let $M$ be a smooth manifold equipped with a closed differential 2-form $\omega$ and a closed differential 1-form $\eta$. We say that the triple $(M,\omega,\eta)$ is a \emph{precosymplectic manifold} if $\ker(\omega)\cap \ker(\eta)$ gives rise to a regular distribution which is strictly contained in $\ker(\omega)$, see \cite{ChineaDeLeonMarrero,GraciaDeLucasRivasRoman-Roy}. The latter implies that $\eta\wedge \omega^r$ is a non-vanishing form and $\omega^{r+1} = 0$ for a certain fixed $r$. In other words, $\omega$ has constant rank $2r< \dim(M)$ and the pair $(M,\omega)$ becomes a presymplectic manifold \cite{LinSjamaar}.

The following key observation will be fundamental later on. 

\begin{lemma}\label{Lemma1}
If $(M,\omega,\eta)$ is a precosymplectic manifold then $\ker(\flat)=\ker(\omega)\cap \ker(\eta)$.
\end{lemma}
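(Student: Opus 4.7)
The plan is to prove both inclusions directly from the definition $\flat(v)=\iota_v\omega+\eta(v)\eta$. The inclusion $\ker(\omega)\cap\ker(\eta)\subseteq \ker(\flat)$ is immediate: if $v$ annihilates both $\omega$ and $\eta$, then both summands defining $\flat(v)$ vanish. So the content of the lemma is the reverse inclusion, and it is here that I would concentrate.

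For $\ker(\flat)\subseteq \ker(\omega)\cap\ker(\eta)$, I would take $v\in T_xM$ with $\iota_v\omega+\eta(v)\eta=0$ and contract this identity once more with $v$. Since $\omega$ is a differential $2$-form, antisymmetry forces $\iota_v\iota_v\omega=\omega(v,v)=0$, and therefore $\eta(v)^2=\eta(v)\cdot\iota_v\eta=0$. Hence $\eta(v)=0$, and feeding this back into $\iota_v\omega+\eta(v)\eta=0$ gives $\iota_v\omega=0$ as well. Thus $v\in\ker(\omega)\cap\ker(\eta)$.

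I do not expect any obstacle here: the argument is purely pointwise and algebraic, uses only the defining formula for $\flat$ together with the antisymmetry of $\omega$, and does not actually need the regularity hypothesis of precosymplecticity (the statement holds for any pair $(\omega,\eta)$ consisting of a $2$-form and a $1$-form). The precosymplectic assumption will only matter afterwards, when the lemma is combined with the constant-rank condition on $\ker(\omega)\cap\ker(\eta)$ to conclude that $\im(\flat)$ is a subbundle and that $\flat$ descends to an isomorphism on the leaf space of $\mathcal{F}_{(\omega,\eta)}$.
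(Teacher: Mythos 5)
Your proof is correct, and it takes a genuinely different and more elementary route than the paper's. You contract the identity $\iota_v\omega+\eta(v)\eta=0$ with $v$ itself, so that antisymmetry of $\omega$ kills the first term and leaves $\eta(v)^2=0$, whence $\eta(v)=0$ and then $\iota_v\omega=0$. The paper instead wedges $\flat(v)$ with $\omega^r$ and uses the identity $\flat(v)\wedge\omega^r=\iota_v(\omega^{r+1})/(r+1)+\eta(v)\eta\wedge\omega^r$ together with the two rank conditions coming from precosymplecticity, namely $\omega^{r+1}=0$ and $\eta\wedge\omega^r$ nowhere vanishing, to force $\eta(v)=0$. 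Your observation that the regularity hypothesis is not actually needed for this pointwise statement is accurate: your argument works for any $2$-form and $1$-form, whereas the paper's argument genuinely invokes the precosymplectic structure (it is the non-vanishing of $\eta\wedge\omega^r$ that lets one cancel it). What the paper's computation buys is a visible link to the form $\eta\wedge\omega^r$ that controls the rank of $\flat$, which is thematically close to the constant-rank discussion that follows the lemma; but as a proof of the stated equality of kernels, your version is shorter, cleaner, and strictly more general.
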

\begin{proof}
It is simple to verify that $\ker(\omega)\cap \ker(\eta) \subseteq \ker(\flat)$. Conversely, using the formula
$$(r+1)\flat(v)\wedge \omega^r/(r+1)=\iota_v(\omega^{r+1})/(r+1)+\eta(v)\eta\wedge \omega^r,$$
we get that if $v\in\ker(\flat)$ then $\eta(v)=0$ and consequently $\iota_v\omega=-\eta(v)\eta=0$.
\end{proof}

The previous result guarantees that the Lichnerowicz map of a precosymplectic $(M,\omega,\eta)$ has constant rank and its kernel determines regular involutive distribution on $M$, as $\ker(\omega)$ and $\ker(\eta)$ are so. We shall denote by $\mathcal{F}_\flat$ the foliation of $M$ corresponding to $\ker(\flat)$. Another important foliation showing up into the whole picture is the null foliation $\mathcal{F}_\omega$ associated with the regular involutive distribution $\ker(\omega)$.

Let $\mathfrak{X}_\textnormal{t}(M,\mathcal{F}_\flat)$ and $\Omega^1(M,\mathcal{F}_\flat)$ denote the space of transverse vector fields and basic differential 1-forms with respect to $(M,\mathcal{F}_\flat)$, respectively. The Lichnerowicz map descends to a linear isomorphism $\flat:\mathfrak{X}_\textnormal{t}(M,\mathcal{F}_\flat)\to \Omega^1(M,\mathcal{F}_\flat)$. It is clear that $\eta \in \Omega^1(M,\mathcal{F}_\flat)$ so there exists a unique $v\in \mathfrak{X}_\textnormal{t}(M,\mathcal{F}_\flat)$ such that $\flat(v)=\eta$. We refer to $v$ as the \emph{transverse Reeb vector field} associated with $\eta$. This allows us to define a Poisson structure on the space of basic functions on $M$ with respect to the foliation $\mathcal{F}_\flat$ (i.e. smooth functions constant along the leaves) by setting
$$ \lbrace f_1,f_2\rbrace = \omega(\overline{v}_1,\overline{v}_2)=\mathcal{L}_{v_2}(f_1),$$
where $\overline{v}_1$ and $\overline{v}_2$ are representative of the elements $v_1$ and $v_2$ in $\mathfrak{X}_\textnormal{t}(M,\mathcal{F}_\flat)$ verifying $\flat(v_1)=df_1$ and $\flat(v_2)=df_2$ for basic functions $f_1$ and $f_2$, respectively, compare \cite{Bursztyn,LinSjamaar}. Such an expression is well-defined since $\ker(\flat)=\ker(\omega)\cap \ker(\eta)$.

The notion of Hamiltonian action as well as moment map in the realm of cosymplectic geometry has been widely explored in the existing literature. Consult for instance \cite{Albert,BazzoniGoertsches,BazzoniGoertsches2,deLucasRivasVilarinoZawora} and their quoted references. Building upon what happens in the case of presymplectic geometry we can introduce a notion of Hamiltonian action for precosymplectic structures.

\begin{definition}\label{def:HamiltonianPrecocymplectic}
Let $(M,\omega,\eta)$ be a precosymplectic manifold and $K$ denote a Lie group with Lie algebra $\mathfrak{k}$. We say that an action of $K$ on $M$ is \emph{precosymplectic} if $k^\ast \omega=\omega$ and $k^\ast \eta=\eta$ for all $k\in K$. Accordingly, the action is said to be \emph{Hamiltonian} if there exists a smooth map $\mu:M\to \mathfrak{k}^\ast$ called \emph{moment map} such that
\begin{itemize}
\item for each $\xi\in \mathfrak{k}$ we have that $\eta(\xi_M)=0$ and $d\mu^\xi=\iota_{\xi_M}\omega$, and
\item $\mu$ is $\textnormal{Ad}^\ast$-equivariant.
\end{itemize}
\end{definition}

In consequence, a precosymplectic moment map is in particular presymplectic moment map. Let us illustrate the notions above with some examples. The fist one is motivated by the main example in \cite{HoffmanSjamaar}.

\begin{example}
	\label{Ex:Precosymplecticfromcosymplectic}
	Let $(M,\omega,\eta)$ be a cosymplectic manifold admitting a Hamiltonian action of the compact torus $K=\mathbb{T}^n$ whose Lie algebra $\mathfrak{k}=\mathbb{R}^n$ is abelian with moment map $\mu:M\to \mathfrak{k}^\ast$, see \cite{BazzoniGoertsches2}. Consider any integer $0\leq k< n$ and the normal subgroup $N=\mathbb{T}^k\subset K$ with Lie algebra $\mathfrak{n}=\mathbb{R}^k$. Let $i:\mathfrak{n}\hookrightarrow \mathfrak{k}$ be the inclusion and $i^\ast:\mathfrak{k}^\ast \to \mathfrak{n}^\ast$ denote its dual map. If $0\in \mathfrak{n}^*$ is a regular value of $i^\ast \circ \mu$ then the manifold $X_0=(i^*\circ \mu)^{-1}(0)$ inherits a precosymplectic structure given by $\omega_0=\omega|_{X_0}$ and $\eta_0=\eta|_{X_0}$. Furthermore, $(X_0,\omega_0,\eta_0)$ gives rise to a precosymplectic Hamiltonian $K$-manifold with moment map $\mu|_{X_0}:X_0\to \mathfrak{n}^\circ \cong (\mathfrak{k}/\mathfrak{n})^\ast$.
\end{example}

\begin{example}
	\label{Ex:CosymplecticTrivialProduct}
	Let $(S,\mathbb{T}^n,\omega_S)$ be a toric symplectic manifold with moment map $\tilde \mu:S\to \mathfrak{k}^\ast$ and let $S_{\textnormal{id}}$ be the cosymplectic manifold defined by the symplectic mapping tori construction with the identity map, so that $S_{\textnormal{id}}=S\times S^1$. The construction described in Example \ref{Ex:Precosymplecticfromcosymplectic} yields a trivial extension of the toric symplectic manifold $(S,\mathbb{T}^n,\mu)$ to $S\times S^1$. In other words, the new moment map $\mu:S\times S^1\to \mathfrak{k}^\ast$ is determined by trivially extending $\tilde{\mu}$. Let $Y_0=(i^*\circ \tilde \mu)^{-1}(0)$. The foliation $\mathcal{F}_\flat$ is obtained from the distribution $\ker(\omega_S|_{Y_0})$ by setting each leaf $L_{(x,\theta)}\subset Y_0\times S^1$ as the leaf $L_x\subset Y_0$, while the foliation $\mathcal{F}_\omega$ is defined by considering the extension $L_{(x,\theta)}=L_x\times S^1\subset Y_0\times S^1$. 
	
	The behavior of the foliations $\mathcal{F}_\flat$ and  $\mathcal{F}_\omega$ might be much more complicated for a general $\mathbb{T}^n$-equivariant symplectomorphism $\varphi: S\to S$ as the following particular instance shows. Consider  $(\mathbb{C}^2, dz_1\wedge d\bar z_1+dz_2\wedge d\bar z_2)$ with $\mathbb{T}^2$-action  on $\mathbb{C}^2$ by $(\theta_1,\theta_2)\cdot(z_1,z_2)=(e^{i\theta_1}z_1,e^{i\theta_2}z_2)$. We have a moment map $\tilde\mu:\mathbb{C}^2\to (\mathbb{R}^2)^\ast\cong\mathbb{R}^2 $ defined by
	$$\tilde\mu(z_1,z_2)=(|z_1|^2-1,|z_2|^2-1).$$

	If $N=S^1\times \{1\}\subset \mathbb{T}^2$ then $Y_0=\{(z_1,z_1)\in \mathbb{C}^2: |z_1|=1\}\cong S^1\times \mathbb{C}$. We can get concrete information about the foliations $\mathcal{F}_\omega$ and $\mathcal{F}_\flat$ depending on the symplectomorphism $\varphi:\mathbb{C}^2\to \mathbb{C}^2$ we choose. For instance:
	\begin{itemize}
		\item Let $\varphi$ be the identity map $\textnormal{id}_{\mathbb{C}^2}$. In this case $Y_0=S^1\times \mathbb{C}\times S^1\cong \mathbb{T}^2\times \mathbb{C}$, so that the foliation $\mathcal{F}_\omega$ is given by the fibers of the projection onto $\mathbb{C}$, whereas the foliation $\mathcal{F}_\flat$ is the sub-foliation of $\mathcal{F}_\omega$ given by the circles $S^1$ in $\mathbb{T}^2$. In both cases, the holonomies are trivial.
		
		\item Let $\varphi(z_1, z_2)=(z_1,-z_2)$, so as to $Y_0=S^1\times (\mathbb{C}\times \mathbb{R})/\mathbb{Z}$, where the quotient space is obtained by identifying $(z_2,0)\sim (-z_2,1)$. In this scenario, the foliation $\mathcal{F}_\omega$ is given by fixing the second coordinate. Observe that not all the holonomies of $\mathcal{F}_\omega$ are trivial. Consider the leaf $L_{0}$ passing through the point $(z_1,[0,r])$. Let $\gamma:I\to L_0$ be the loop given by $\gamma(t)=(z_1,[0,e^{2\pi i t}])$. If we pick the transversal section $\Sigma$ parametrized by  $z\in \mathbb{C}$ with $\vert z\vert$ small enough, then each $\gamma_{z}(t)=(z_1,[z,e^{2\pi i t}])$ corresponds to a lifted path over $\gamma(t)$. Hence, every path $\gamma_{z}(t)$ intersects $\Sigma$ at the values associated with $z$ and $-z$. Therefore, the holonomy of $L_0$ is $\mathbb{Z}_2$. Analogously, one can construct examples with holonomy groups $\mathbb{Z}_p$ using a rational rotation or with holonomy group $\mathbb{Z}$ using an irrational rotation.
	\end{itemize}	
\end{example}

Let us step back to the general setting. From Definition \ref{def:HamiltonianPrecocymplectic} it follows that $(M,\omega)$ is a presymplectic Hamiltonian $K$-manifold \cite{LinSjamaar}. Let us define $\mathfrak{n}=\lbrace \xi\in \mathfrak{k}: \flat(\xi_M)=0\rbrace$. Observe that $\flat(\xi_M)=\iota_{\xi_M}\omega$ and therefore $\mathfrak{n}=\lbrace \xi\in \mathfrak{k}: \iota_{\xi_M}\omega=0\rbrace$. It is simple to verify that $\mathfrak{n}$ is an ideal of $\mathfrak{k}$. Define $N$ to be the normal connected immersed (but not necessarily closed) Lie subgroup of $K$ whose Lie algebra is $\mathfrak{n}$. The Lie group $N$ acts trivially on the leaf spaces $M/\mathcal{F}_\flat$ and $M/\mathcal{F}_\omega$, so the induced action of $K$ descends to topological action of the quotient group $K/N$ along such leaf spaces. If $x\in M$, $v\in T_x \mathcal{F}_\flat=\ker(\omega_x)\cap \ker(\eta_x)$, and $\xi\in \mathfrak{k}$ then
$$d\mu^\xi_x(v)=\omega_x(\xi_M(x),v)=-\omega_x(v,\xi_M(x))-\eta_v(v)\eta_x(\xi_M(x))=-\flat_x(v)(\xi_{M}(x))=0.$$

This implies that the moment map $\mu$ also descends leaf spaces $M/\mathcal{F}_\flat$ and $M/\mathcal{F}_\omega$. Additionally, if $M$ is connected then the affine span of the image $\mu(M)$ is of the form
$\lambda+\mathfrak{n}^\circ$ for some element $\lambda\in \mathfrak{k}^\ast$ which is fixed under the coadjoint action of $K$ on $\mathfrak{k}^\ast$. The latter assertion can be similarly shown as in \cite[Prop. 2.9.1]{LinSjamaar} for the presymplectic case. In this scenario, we can replace $\mu$ with $\mu-\lambda$ to obtain a new
$\textnormal{Ad}^\ast$-equivariant moment map which maps $M$ into $\mathfrak{n}^\circ\cong (\mathfrak{k}/\mathfrak{n})^\ast$ and which
descends to a continuous map $\mu_\flat: M/\mathcal{F}_\flat\to (\mathfrak{k}/\mathfrak{n})^\ast$. 

Following \cite{LinSjamaar}, we say that the action of $K$ on $(M,\omega)$ is \emph{clean} if $T_x(N\cdot x)=T_x(K\cdot x)\cap T_x \mathcal{F}_\omega$ for all $x\in M$. Since $\eta(\xi_M)=0$ for each $\xi\in \mathfrak{k}$ we obtain that $T_x(K\cdot x)\subseteq \ker(\eta)$. Thus, by Lemma \ref{Lemma1} it follows that such a clean action requirement amounts to asking
$$T_x(N\cdot x)=T_x(K\cdot x)\cap T_x \mathcal{F}_\flat,\quad x\in M.$$

Hence, as a direct application of the main results proven in \cite{LinSjamaar}, one can get the following expected analogues in the precosymplectic context. This stems from the fact that a precosymplectic Hamiltonian action is in particular a presymplectic Hamiltonian action.

\begin{proposition}\label{prop:MomentBodyMorseBott}
Let $(M,\omega,\eta)$ be a precosymplectic manifold which admits a clean Hamiltonian action of a compact and connected Lie group $K$ with moment map $\mu: M\to \mathfrak{k}^\ast$.
\begin{itemize}
\item Suppose that $M$ is connected and that $\mu:M\to \mathfrak{k}^\ast$ is proper. Choose a maximal torus $T$ of $K$ and a closed Weyl chamber $C$ in $\mathfrak{t}^\ast$, where $\mathfrak{t}$ is the Lie algebra of $T$, and define $\triangle(M) =\mu(M)\cap C$. Then, $\triangle(M)$ is a closed convex polyhedral set. It is rational if and only if $N$ is closed.
\item If $K$ is a torus then for every $\xi\in\mathfrak{k}$ the component function $\mu^\xi$ of the moment map $\mu$ is a Morse-Bott function and the index of every component of $\textnormal{Crit}(\mu^\xi)$ is even.
\end{itemize}
\end{proposition}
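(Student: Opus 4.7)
The plan is to reduce both items directly to the presymplectic convexity and Morse--Bott theorems of \cite{LinSjamaar}, using the translation between the precosymplectic and presymplectic pictures already prepared in the paragraph preceding the proposition.

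First I would verify that the hypotheses of the Lin--Sjamaar results are met for the underlying presymplectic structure $(M,\omega)$. By Definition \ref{def:HamiltonianPrecocymplectic}, $K$ preserves $\omega$, the map $\mu$ is $\textnormal{Ad}^\ast$-equivariant, and $d\mu^\xi = \iota_{\xi_M}\omega$ for every $\xi\in \mathfrak{k}$, so $\mu$ is a presymplectic moment map in the sense of loc.\ cit. The next point is to check that the ideal appearing in the presymplectic setting, namely $\{\xi\in\mathfrak{k} : \iota_{\xi_M}\omega = 0\}$, coincides with the ideal $\mathfrak{n}$ used here: since $\eta(\xi_M)=0$ for every $\xi\in\mathfrak{k}$, the definition of $\flat$ gives $\flat(\xi_M)=\iota_{\xi_M}\omega$, and hence the two ideals agree and the normal subgroup $N$ is the same object in both settings.

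With this identification, the cleanness hypothesis of \cite{LinSjamaar} is satisfied. Indeed, $T_x(K\cdot x)\subseteq \ker(\eta_x)$ because $\eta(\xi_M)=0$, so by Lemma \ref{Lemma1} the intersection $T_x(K\cdot x)\cap T_x\mathcal{F}_\omega$ equals $T_x(K\cdot x)\cap T_x\mathcal{F}_\flat$, which is the cleanness condition assumed in the proposition. After this translation, the first item is a direct application of the Lin--Sjamaar convexity theorem (closedness, convexity and polyhedrality of $\triangle(M)=\mu(M)\cap C$, together with rationality iff $N$ is closed), and the second item is obtained by invoking their Morse--Bott theorem for presymplectic torus actions, which yields the Morse--Bott property of $\mu^\xi$ and the even parity of the index of every component of $\textnormal{Crit}(\mu^\xi)$.

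The only non-formal step in the argument is the verification that the precosymplectic cleanness condition coincides with the presymplectic one, and this is the point where the main obstacle would lie if it had not already been addressed; fortunately the identity $\ker(\flat)=\ker(\omega)\cap \ker(\eta)$ from Lemma \ref{Lemma1}, combined with $\eta(\xi_M)=0$, resolves it immediately. Once this equivalence is recorded, the proposition follows at once from the cited theorems without any further calculation.
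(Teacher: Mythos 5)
Your proposal is correct and follows essentially the same route as the paper, which likewise justifies the proposition as a direct application of the Lin--Sjamaar presymplectic convexity and Morse--Bott theorems, using the observation that a precosymplectic Hamiltonian action is in particular presymplectic and that the two cleanness conditions coincide via Lemma \ref{Lemma1} together with $\eta(\xi_M)=0$. Your explicit check that the ideal $\mathfrak{n}$ agrees in both settings is a worthwhile detail the paper only records implicitly.
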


Using Example \ref{Ex:CosymplecticTrivialProduct} we can illustrate the previous result in a very simple situation.

\begin{example} \label{Ex:Complexspace}
Let us consider the toric symplectic manifold $(\mathbb{C}^n, \mathbb{T}^n, \sum_{j=1}^ndz_j\wedge d\bar z_j)$ with moment map $\tilde \mu:\mathbb{C}^n\to (\mathbb{R}^n)^\ast\cong \mathbb{R}^n$ defined by $\tilde{\mu}(z_1,\ldots,z_n)=\sum_{j=1}^n(|z_j|^2-1)\xi_j$. Here $\lbrace \xi_j\rbrace$ stands for the canonical basis of $\mathbb{R}^n$. If $\mathfrak{n}=\langle \xi_1,\ldots, \xi_k\rangle$ then

 $$X_0=\{((z_1,\ldots,z_n),\alpha)\in \mathbb{C}^n\times S^1: \vert z_j\vert =1, \textnormal{ for } 1 \leq j\leq k\}.$$

The torus $\mathbb{T}^n$ acts on $X_0$ by $(\theta_1,\ldots\theta_n)\cdot ((z_1,\ldots, z_n),\theta)=((e^{i\theta_1}z_1,\ldots,e^{i\theta_n}z_n),\theta)$. Such an action preserves the precosymplectic structure on $X_0$ and inherits a moment map $\mu:X_0\to (\mathbb{R}^n/\mathfrak{n})^*\cong \mathbb{R}^{n-k}$ which is given by $\mu(z,\theta)=\sum_{j=k+1}^n(|z_j|^2-1)\xi_j$. Note that 

$$\mu(X_0)=\bigl\{\sum_{j=k+1}^n r_{j}\xi_j:\ r_j\geq -1\bigr\}.$$

We may depict the image of $\mu$ for $n=3$ and $k=1$ as in Figure \ref{Fig:Polyhedron}.

\begin{figure}[h!]
	\centering
	\includegraphics[width=0.60\textwidth]{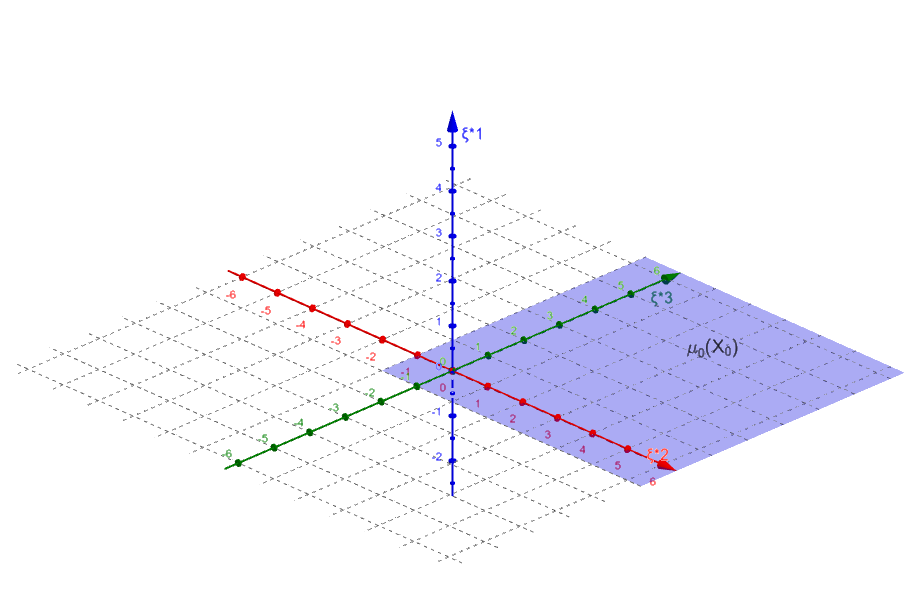}
	\caption{\footnotesize $\mu(X_0)$ for $S=\mathbb{C}^3$ and $\mathfrak{n}=\langle \xi_1 \rangle$.}
	\label{Fig:Polyhedron}
\end{figure}
\end{example}

In the next sections we shall discover that these results for Hamiltonian precosymplectic structures can be easily promoted to Hamiltonian $0$-shifted cosymplectic versions over differentiable stacks. 

\section{0-shifted cosymplectic structures}

Throughout this section we assume that the reader is familiar with the notion of Lie groupoid and the geometric/topological aspects underlying its structure \cite{delHoyo}. The structural maps of a Lie groupoid $G\rr M$ are denoted by $(s,t,m,u,i)$ where $s,t:G\to M$ are the maps respectively indicating the source and target of the arrows, $m:G^{(2)}\to G$ stands for the partial composition of arrows, $u:M\to G$ is the unit map, and $i:G\to G$ is the map determined by the inversion of arrows. The Lie algebroid of $G$ is determined by the vector bundle $A=\ker(ds)|_M$ with anchor map  given by $\rho=dt|_A$. A differential for $\omega\in \Omega^\bullet(M)$ is said to be \emph{basic} if $(t^\ast-s^\ast)(\omega)=0$. If $G$ is source-connected, the condition for $\omega$ being basic is equivalent to asking that it is both \emph{horizontal} $\iota_{\rho(a)}\omega=0$ and \emph{invariant} $\mathcal{L}_{\rho(a)}\omega=0$ for all $a\in \Gamma(A)$. Although basic differential forms are defined on $M$ we denote the space of basic forms by $\Omega_{\textnormal{bas}}^\bullet(G)$, hoping that there is no risk of confusion.

Motivated by the notion of $0$-shifted symplectic structure studied in \cite{HoffmanSjamaar} we set up the following definition.

\begin{definition}\label{def:0shifted}
A $0$-\emph{shifted cosymplectic structure} on $G\rr M$ is determined by a pair of closed basic differential forms $\omega\in \Omega_{\textnormal{bas}}^2(G)$ and $\eta\in \Omega_{\textnormal{bas}}^1(G)$ with $\eta$ nowhere vanishing such that the corresponding Lichnerowicz map $\flat$ induces a quasi-isomorphism for all $x\in M$:
\begin{equation}\label{diag:cochain}
	\begin{tikzcd}
		0 \arrow[r] & A_x \arrow[r, "\rho_x"] \arrow[d] & T_xM \arrow[r] \arrow[d, "\flat_x"] & 0 \arrow[r]  \arrow[d] & 0 \\
		0 \arrow[r] & 0 \arrow[r] & T^{\ast}_xM \arrow[r, "\rho^{\ast}_x"'] & A^{\ast}_x \arrow[r] & 0
	\end{tikzcd}
\end{equation}
\end{definition}

A couple of observations come in order as follows. First of all, the fact that $\flat$ indeed defines a cochain map between the complexes depicted above follows from $\omega$ and $\eta$ being basic differential forms. For instance, to verify that $\flat\rho=0$ we pick $a\in A_x$ and $v\in T_xM$. There is $\tilde{v}\in T_{1_x} G$ such that $dt_{1_x}(\tilde{v})=v$, so that:
\begin{eqnarray*}
	\langle \flat(\rho(a)),v \rangle_x &=& \omega_x(\rho_x(a),v)+\eta_x(\rho_x(a))\eta_x(v)=(t^*\omega)_{1_x}(a, \tilde v)+(t^*\eta)_{1_x}(a)\eta_x(v)\\
	&=& (s^*\omega)_{1_x}(a, \tilde v)+(s^*\eta)_{1_x}(a)\eta_x(v)=0.
\end{eqnarray*}

Similarly, $\rho^*\flat=0$. Second, to ask for $\flat$ being a quasi-isomorphism for all $x\in M$ is equivalent to asking that the anchor map $\rho:A\to TM$ is injective, which in turn is equivalent to having that $G\rr M$ is a foliation groupoid (see \cite{CrainicMoerdijk}), and $\ker(\flat)=\im(\rho)$. Therefore, $\flat$ yields a fiberwise isomorphism between $TM/\im(\rho)$ and $\ker(\rho^\ast)$. Observe that if $\ker(\flat)$ is strictly contained in $\ker(\omega)$ then we get that $(M,\omega,\eta)$ becomes a precosymplectic manifold. Additionally, if $\tilde{\omega}=s^\ast \omega=t^\ast \omega$ and $\tilde{\eta}=s^\ast \eta=t^\ast \eta$ then $(G,\tilde{\omega},\tilde{\eta})$ is also a precosymplectic manifold with $\ker(\tilde{\flat})=\ker(ds)+\ker(dt)$. 

Let us consider the normal vector bundles $N_0=TM/\ker(\flat)$ and $N_1=TG/(\ker(ds)+\ker(dt))$. Similarly as proven in \cite{HoffmanSjamaar}, there are isomorphisms $N_1\cong s^\ast N_0$ and $N_1\cong t^\ast N_0$ which allow one to obtain well-defined pullbacks $s^\ast, t^\ast:\Gamma(N_0)\to \Gamma(N_1)$. In particular, the space of basic vector fields $\Gamma_{\textnormal{bas}}(G):=\lbrace v\in \Gamma(N_0): s^\ast v= t^\ast v\rbrace$ inherits a Lie algebra structure which models the Lie algebra of vector fields of the differentiable stack $[M/G]$ presented by $G\rr M$. The fact the Lichnerowicz map $\flat:TM\to T^\ast M$ induces a quasi-isomorphism implies that it descends to a linear isomorphism 
$$\flat:\Gamma_{\textnormal{bas}}(G)\to \Omega_{\textnormal{bas}}^1(G),$$
defined by $\flat(v)=\iota_{\overline{v}}\omega+\eta(\overline{v})\eta$ where $\overline{v}\in \mathfrak{X}(M)$ is a representative of $v\in \Gamma_{\textnormal{bas}}(G)$. Such an expression is well-defined, as it is independent of the choice of representatives, compare \cite[p. 15]{HoffmanSjamaar}. The latter enables us to introduce the \emph{Reeb vector field} of the pair $(\omega,\eta)$ as the basic vector field $v$ such that $\flat(v)=\eta$. Thus, the space of basic functions $C^\infty(M)^G=\Omega_{\textnormal{bas}}^0(G)$ admits a Poisson structure induced by $(\omega,\eta)$ which is defined by  $\lbrace f_1,f_2\rbrace=\omega(\overline{v}_1,\overline{v}_2)=\mathcal{L}_{v_2}(f_1)$, where $\overline{v}_1,\overline{v}_2\in \mathfrak{X}(M)$ are representatives of the basic vector fields $v_1,v_2\in \Gamma_{\textnormal{bas}}(G)$ satisfying $\flat(v_1)=df_1$ and $\flat(v_2)=df_2$ for all $f_1,f_2\in C^\infty(M)^G$. Recall that such an expression is well-defined since $\ker(\flat)=\ker(\omega)\cap \ker(\eta)$. 

By mimicking the approach adopted in \cite{Maglio} to deal with 0-shifted symplectic structures it is simple to check that the notion of 0-shifted cosymplectic structure introduced in Definition \ref{def:0shifted} is Morita invariant. This can be summarized in the following result.

\begin{proposition}\label{Prop: Morita invariance}
	Let $G\rr M$ be a Lie groupoid equipped with a pair of closed basic forms $\omega\in \Omega_{\textnormal{bas}}^2(G)$ and $\eta\in \Omega_{\textnormal{bas}}^1(G)$. The following assertions hold true:
	\begin{enumerate}
		\item If the cochain map $\flat_{(\omega,\eta)}$ in \eqref{diag:cochain} is a quasi-isomorphism at $x\in M$ then it is a quasi-isomorphism at every point lying inside the groupoid orbit through $x$.
		\item Let $\phi:(G'\rr M')\to (G\rr M)$ be a Morita map\footnote{It is well-known that we can consider the Morita map to cover a surjective submersion on objects.}. Then, the cochain map $\flat_{(\omega,\eta)}$ is a quasi-isomorphism at all points in $M$ if and only if the cochain map $\flat_{(\phi^\ast \omega,\phi^\ast \eta)}$ is so at all points in $M'$. In particular, the pullback $\phi^\ast: \Omega_{\textnormal{bas}}^\bullet(G)\to \Omega_{\textnormal{bas}}^\bullet(G')$ gives rise to a one-to-one correspondence between 0-shifted cosymplectic structures on $G$ and $G'$.
	\end{enumerate}
\end{proposition}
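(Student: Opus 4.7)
The plan is to adapt the argument used in \cite{Maglio} for 0-shifted symplectic structures to the present cosymplectic setting, treating the extra 1-form $\eta$ alongside $\omega$. The guiding principle is that the tangent complex $[A \xrightarrow{\rho} TM]$ and its dual are Morita invariant at each point (this is the standard Morita invariance of the tangent complex of a Lie groupoid), and that the Lichnerowicz cochain map $\flat_{(\omega,\eta)}$ interacts well with these invariances precisely because $\omega$ and $\eta$ are basic forms.

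For part (1), given an arrow $g:x\to y$ in $G$, I would pick a bisection $\sigma:U\to G$ through $g$, i.e.,\ a local section of $s$ satisfying $\sigma(x)=g$, so that $\psi:=t\circ\sigma:U\to M$ is a local diffeomorphism sending $x$ to $y$. Then $\sigma$ induces (via its adjoint action) a bundle isomorphism $A|_U\to \psi^\ast A$ covering $\psi$, which together with $d\psi$ yields an isomorphism between the two-term complexes $[A_x\to T_xM]$ and $[A_y\to T_yM]$. Since $\omega$ and $\eta$ are basic, they pull back under $\psi$ to themselves near $x$, which makes the above isomorphism of tangent complexes commute with the induced dual isomorphism of cotangent complexes through the Lichnerowicz maps at $x$ and $y$. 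Hence $\flat$ is a quasi-isomorphism at $x$ if and only if it is so at $y$, and (1) follows since every point in the orbit of $x$ is of the form $t(g)$ for some $g\in s^{-1}(x)$.

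For part (2), let $\phi$ cover $f:M'\to M$ and fix $x'\in M'$ with $x=f(x')$. The standard Morita invariance of the tangent complex provides that $d\phi|_{A'_{x'}}:A'_{x'}\to A_x$ and $df_{x'}:T_{x'}M'\to T_xM$ assemble into a quasi-isomorphism of two-term complexes, and the pullback $\phi^\ast:[T^\ast_xM\to A^\ast_x]\to [T^\ast_{x'}M'\to A'^\ast_{x'}]$ is its dual quasi-isomorphism. A short computation based on the identity $f^\ast(\iota_{df_{x'}(v)}\omega)=\iota_v(f^\ast\omega)$ and its analogue for $\eta$ shows that $\flat_{(\phi^\ast\omega,\phi^\ast\eta)}$ at $x'$ equals the composite of $(d\phi|_{A'_{x'}},df_{x'})$, $\flat_{(\omega,\eta)}$ at $x$, and $\phi^\ast$. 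By the two-out-of-three property for quasi-isomorphisms, $\flat_{(\omega,\eta)}$ is a quasi-isomorphism at $x$ if and only if $\flat_{(\phi^\ast\omega,\phi^\ast\eta)}$ is so at $x'$, and since $f$ is surjective on $G$-orbits, this globalizes by (1).

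For the final statement on the one-to-one correspondence, the pullback $\phi^\ast:\Omega_{\textnormal{bas}}^\bullet(G)\to \Omega_{\textnormal{bas}}^\bullet(G')$ is a bijection for any Morita map and preserves closedness. The nowhere-vanishing of $\eta$ survives the pullback because basic 1-forms descend to sections of the dual normal bundle $(TM/\im\rho)^\ast$, which is naturally identified across Morita equivalent groupoids. Combined with (2), this yields the advertised correspondence. The step requiring the most care is the explicit commutativity of the square relating the Lichnerowicz maps on $M'$ and on $M$ in the degree where $\omega$ and $\eta$ actually appear; once this is in hand, the rest of the proof is algebraic and relies only on standard Morita machinery for the tangent complex.
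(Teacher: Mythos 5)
Your proof is correct and follows essentially the same route as the paper, which simply defers to Lemma 3.2.2 and Propositions 3.2.3--3.2.4 of the cited thesis of Maglio, noting that the arguments carry over because $\omega$ and $\eta$ are basic; your write-up supplies exactly the details that reference contains (local bisections for orbit invariance, the quasi-isomorphism of tangent complexes for Morita maps, the commuting square $\flat_{(\phi^\ast\omega,\phi^\ast\eta)}=\phi^\ast\circ\flat_{(\omega,\eta)}\circ(d\phi|_{A'},df)$, and two-out-of-three). No gaps.
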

\begin{proof}
These facts can be proven by arguing exactly as in Lemma 3.2.2 and  Propositions 3.2.3 and 3.2.4 from \cite{Maglio} since the differential forms we are dealing with are basic. Nevertheless, here we sketch a few details about the second assertion for the reader's convenience. Since $\phi$ is a Morita map, it follows that the chain map
\begin{equation*}
\begin{tikzcd}[row sep=2em, column sep=2em]
0\arrow[r]&A'_x\arrow[r,"\rho'"]\arrow[d,"d\phi"]&T_xM'\arrow[d,"d\phi"]\arrow[r]&0\\
0\arrow[r]&A_{\phi(x)}\arrow[r,"\rho"]& T_{\phi(x)}M\arrow[r]&0
\end{tikzcd}
\end{equation*}	
is a quasi-isomorphism for all $x\in M'$. More explicitly:

$$\ker(\rho')=\text{Lie}(G'(x,x))\simeq \text{Lie}(G(\phi(x),\phi(x)))=\ker(\rho),$$
and $$\text{coker}(\rho')\simeq N_xM'\simeq N_{\phi(x)}M\simeq \text{coker}(\rho).$$

Therefore, the Lie groupoid $G'$ is a foliation groupoid if and only if so is $G$. To verify our remaining quasi-isomorphism condition observe that $$\langle \phi^*\flat_{x}(u),v\rangle=\phi ^*\flat_{x}(u,v)=\flat_{\phi(x)}(d\phi(u),d\phi(v))=\langle \flat_{\phi(x)}(d\phi(u)),d\phi(v)\rangle,$$
for $u,v\in T_x M'$.
If we assume that $\text{im}(\rho_{\phi(x)})=\ker(\flat_{\phi(x)})$ and $u\in \ker(\phi^*\flat_x)$, since $\phi$ can be taken as a submersion, then $d\phi(u)\in \ker (\flat_{\phi(x)})$ and  $[d\phi(u)]$ is zero in $\text{coker}(\rho)$. Hence,  $[u]$ is zero in $\text{coker}(\rho')$. Thus,  $\ker(\phi^*\flat_x)=\text{im}(\rho'_x)$. Similarly, if we suppose that $\text{im}(\rho'_x)= \ker (\phi^*\flat_x)$ and $\tilde u\in \ker (\flat_{\phi(x)})$, then  $[\tilde u]$ is zero in $\text{coker}(\rho)$. That is to say, $\ker(\flat_{\phi(x)})=\text{im}(\rho_{\phi(x)}).$

From the above we get that if $\flat$ defines a quasi-isomorphism for all point in $M$, then so does $\phi^*\flat$ for all points in $M'$. Conversely, suppose that $\phi^*\flat_x$ defines a quasi-isomorphism for every point $x \in M'$. In this case, given $y \in M$ we know that there exist a point $x \in M'$ together with an arrow $y \xleftarrow{\,g\,} \phi(x)$, as the map $\phi$ is Morita. Consequently, the fact that if $\flat$ is a quasi-isomorphism at a point then it is so at every element in the orbit through such a point, allows us to guarantee that $\flat_y$ is a quasi-isomorphism.
\end{proof}

This clearly enables us to define a notion of stacky cosymplectic structure, namely, a notion of cosymplectic structure over the differentiable stack $[M/G]$ presented by $G\rr M$.

\section{Moment map morphisms}

This section is devoted to study some aspects concerning a notion of Hamiltonian action of foliation Lie 2-groups on Lie groupoids endowed with 0-shifted cosymplectic structures. In our situation, such a study can be done by applying several key results of the general theory developed in \cite{HoffmanSjamaar}.

A \emph{Lie $2$-group} is a group internal to the category of Lie groupoids. We will assume that the reader is also familiar with the terminology described in \cite[s. 6]{HoffmanSjamaar}. Suppose that $K_1 \rightrightarrows K_0$ is a foliation Lie $2$-group with associated crossed module of Lie groups $(K,N,\partial,\alpha)$ and Lie $2$-algebra $\mathfrak{k}_1 \rightrightarrows \mathfrak{k}_0$. In this case, we have that $\textnormal{Lie}(\partial):\mathfrak{n}\to \mathfrak{k}=\mathfrak{k}_0$ is injective. As the Lie algebra $\textnormal{Lie}(\partial)(\mathfrak{n})\cong \mathfrak{n}$ is an ideal in $\mathfrak{k}$, we may consider the quotient Lie algebra $\mathfrak{k}/\mathfrak{n}$. Let $\pi:\mathfrak{k}\to \mathfrak{k}/\mathfrak{n}$ the quotient map. The pair $(\pi\circ \textnormal{Lie}(t_K),\pi):(\mathfrak{k}_1 \rightrightarrows \mathfrak{k}_0)\to (\mathfrak{k}/\mathfrak{n}\rightrightarrows \mathfrak{k}/\mathfrak{n})$ is a Morita morphism of Lie $2$-algebras and, as a consequence, the Lie 2-group action determined by the adjoint action of $K_1$ on $\mathfrak{k}_1$ descends to a Lie 2-group action of $K_1$ on $\mathfrak{k}/\mathfrak{n}$. Therefore, we can define a \emph{coadjoint} Lie 2-group action $\textnormal{Ad}^\ast:(K_1\times (\mathfrak{k}/\mathfrak{n})^\ast\rightrightarrows K_0\times (\mathfrak{k}/\mathfrak{n})^\ast)\to ((\mathfrak{k}/\mathfrak{n})^\ast \rightrightarrows (\mathfrak{k}/\mathfrak{n})^\ast)$ by dualizing the latter action. 

Let us now consider a Lie $2$-group action of $K_1\rr K_0$ on a foliation groupoid $G\rr M$. For every $\xi\in \mathfrak{k}$ the pair $((1_\xi)_{G},\xi_{M})$, which consists of the fundamental vector fields of the respective Lie group actions, gives rise to a multiplicative vector field on $G$. The \emph{fundamental vector field} of $\overline{\xi}\in \mathfrak{k}/\mathfrak{n}$ is by definition the basic vector field on $G$ associated with $((1_\xi)_{G},\xi_{M})$ for any representative $\xi\in \mathfrak{k}$, consult \cite[s. 5.4]{HoffmanSjamaar}. This is well-defined and determines a Lie algebra anti-morphism $\mathfrak{k}/\mathfrak{n}\to \Gamma_{\textnormal{bas}}(G)$.

\begin{definition}
Let $G\rr M$ be a Lie groupoid endowed with a 0-shifted cosymplectic structure $(\omega,\eta)$ and $K_1\rr K_0$ denote a foliation Lie 2-group. We say that a Lie 2-group action of $K_1\rr K_0$ on $G\rr M$ is \emph{Hamiltonian} if there exists a morphism of Lie groupoids called \emph{moment map} $\mu:G\to (\mathfrak{k}/\mathfrak{n})^\ast \cong \mathfrak{n}^\circ \subseteq \mathfrak{k}^\ast$ such that
\begin{itemize}
\item the action of $K_0$ on $(M,\omega,\eta)$ is precosymplectic,
\item for each $\overline{\xi}\in \mathfrak{k}/\mathfrak{n}$ we have that $\eta(\xi_M)=0$ and $d\mu^\xi_0=\iota_{\xi_M}\omega$, and
\item $\mu$ is $\textnormal{Ad}^\ast$-equivariant, in the sense that it is 2-equivariant with respect to the Lie 2-group action of $K_1\rr K_0$ on $G\rr M$ and the coadjoint 2-action of $K_1\rr K_0$ on $(\mathfrak{k}/\mathfrak{n})^\ast \rightrightarrows (\mathfrak{k}/\mathfrak{n})^\ast$.
\end{itemize}
\end{definition}

One can build examples of these structures by applying to Examples \ref{Ex:Precosymplecticfromcosymplectic} and \ref{Ex:CosymplecticTrivialProduct} the following general procedure.

\begin{example}
\label{Ex:precosymplecticManifoldwithFoliaiton}
Let $(M,\omega,\eta)$ be a precosymplectic manifold with corresponding foliation $\mathcal{F}_\flat$. For any source-connected foliation groupoid $G\rr M$ with Lie algebroid equal to $T\mathcal{F}_\flat$ it follows that the pair $(\omega,\eta)$ determines a 0-shifted cosymplectic structure $G$. Indeed, this is consequence of \cite[Lem. 5.3.8]{HoffmanSjamaar}, as such forms are infinitesimally invariant since they are closed and horizontal with respect to $\mathcal{F}_\flat$ because $\ker(\flat)=\ker(\omega)\cap \ker(\eta)$. In particular, one may choose $G$ to be the monodromy groupoid or holonomy groupoid of the foliation $\mathcal{F}_\flat$. Additionally, if $K$ is a Lie group acting in a Hamiltonian fashion on $(M,\omega,\eta)$ with moment map $\mu_0:M\to (\mathfrak{k}/\mathfrak{n})^\ast$ then we can promote $K$ to a foliation Lie 2-group $K_1\rr K$ acting on $G\rr M$ in a Hamiltonian manner with moment map morphism $\mu=s^\ast \mu_0$ covering $\mu_0$. Compare \cite[s. 7.2]{HoffmanSjamaar}.

In particular, under the assumptions set out in Example \ref{Ex:Complexspace}, we can consider the precosymplectic manifold $(X_0,\sum_{j=k+1}^ndz_j\wedge d\bar z_j, d\theta)$ with moment map  $\mu(z,\theta)=\sum_{j=k+1}^n(|z_j|^2-1)\xi_j$ and foliation $\mathcal{F}_\flat$ whose tangent distribution is defined by $$T_{(z_1,\ldots, z_n,\theta)}\mathcal{F}_\flat=T_{z_1}S^1\times \cdots \times T_{z_k}S^1\times 0^{n-k}\times T_\theta S^1\subset T_{z}X_0.$$

On the one hand, since the holonomies are trivial the holonomy groupoid of $\mathcal F_\flat$ is 
$$\mathbb{T}^k\times \mathbb{T}^k\times \mathbb{C}^{n-k}\times S^1\rr \mathbb{T}^k\times \mathbb{C}^{n-k}\times S^1,$$
where the structural maps come from the submersion groupoid associated with the projection $X_0\to \mathbb{C}^{n-k}$ onto the last $n-k$ coordinates. On the other hand, the inclusion $i:\mathbb{R}^k\hookrightarrow \mathbb{R}^n$ is integrated to the Lie group morphism $\partial:\mathbb{R}^k\to \mathbb{T}^n$ given by $(t_1,\ldots, t_k)\mapsto (e^{2\pi i t_1},\ldots,e^{2\pi i t_k},1,\ldots,1)$. Such a Lie group morphism defines a crossed module of Lie group with the trivial action of $\mathbb{T}^n$ on $\mathbb{R}^k$, so that we obtain a foliation Lie 2-group $\mathbb{R}^k\ltimes_\partial \mathbb{T}^n\rr\mathbb{T}^n$. It is worth mentioning that the groupoid structure on $\mathbb{R}^k\ltimes_\partial \mathbb{T}^n$ is the one obtained from the transformation groupoid with respect to the action of $\mathbb{R}^k$ on $\mathbb{T}^n$ through $\partial$.
\end{example}

We now aim to use the main results of \cite{HoffmanSjamaar} to formulate their analogues within the framework of cosymplectic geometry. Let us begin by describing a reduction procedure. Let $(G\rr M,\omega,\eta)$ be a 0-shifted cosymplectic groupoid admitting a Hamiltonian action of a foliation Lie 2-group $K_1\rr K_0$ with moment map morphism $\mu:G\to (\mathfrak{k}/\mathfrak{n})^\ast$. If $0\in (\mathfrak{k}/\mathfrak{n})^\ast$ is a regular value of $\mu_0$ then we get that $\mu_1^{-1}(0)\rr \mu_0^{-1}(0)$ gives rise to a Lie subgroupoid of $G\rr M$ which we refer to as the \emph{zero fiber} of $\mu$. Observe that such a zero fiber naturally inherits a Lie 2-group action of $K_1\rr K_0$. The fact that $\ker(\flat)=\ker(\omega)\cap \ker(\eta)$ allows us to use the foliation $\mathcal{F}_\flat$ to construct a pair $(R_1,\phi)$ where $R_1\rr R_0$ is a foliation groupoid equipped with a regular\footnote{Regular here means that the Lie 2-group action satisfies the following conditions: 1) it is \emph{locally leafwise transitive} in the sense that for all $z\in R_0$ the image of the map $\mathfrak{n}\to T_z R_0$ defined by $\xi\mapsto (\textnormal{Lie}(\partial)(\xi))_{R_0}(z)$ is $\im(\rho_{R,z})$, 2) the action of $K$ on $R_0$ is free, and 3) if $n\in N$ verifies $n\cdot p=p$ for any $p\in R_1$ then $n\in \ker(\partial)$.} Lie 2-group action of $K_1\rr K_0$, $\phi: R_1\to \mu_1^{-1}(0)$ is a 2-equivariant Morita map, and the $K_0$-orbits of $R_0$ are the leaves of the foliation $\mathcal{F}_{\phi_0^\ast \flat}$. Here $\phi^\ast \flat$ stands for the Lichnerowicz map associated with the pair $(\phi_0^\ast \omega, \phi_0^\ast \eta)$ which remains having constant rank since $\phi$ is a Morita map. Actually, $\phi^\ast \flat$ is also a quasi-isomorphism for the corresponding diagram \eqref{diag:cochain}, see Proposition \ref{Prop: Morita invariance}. Once again, such a pair $(R,\phi)$ can be constructed by mimicking the proof of \cite[Prop. 8.2.1]{HoffmanSjamaar} and considering instead the foliation $\mathcal{F}_\flat$. 

The notion of a principal bundle in the 2-category of differentiable stacks was introduced in \cite{BursztynNosedaZhu}, while its groupoid analogue was defined in \cite[s. 6.13]{HoffmanSjamaar}. Using this terminology, one can introduce a reduction procedure for $0$-shifted cosymplectic structures by following the analogous construction in the case of 0-shifted symplectic structures. 

\begin{proposition}
Suppose that $N$ acts freely on $R_1$. Then, the orbit space $R_1/N$ is a  manifold and the quotient projection $R_1\to R_1/N$ is a principal $N$-bundle. Furthermore, there exist a 0-shifted cosymplectic groupoid $(K\times_NR_1\rr R_0,\omega^{\textnormal{red}},\eta^{\textnormal{red}})$	and a Lie groupoid morphism $\psi: R_1\to  K\times_NR_1$ covering the identity $\textnormal{id}_{R_0}$ and defining a principal $K_1$-bundle such that $\psi^\ast \omega^{\textnormal{red}}=\phi^\ast \omega$ and $\psi^\ast \eta^{\textnormal{red}}=\phi^\ast \eta$.
\end{proposition}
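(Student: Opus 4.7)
The plan is to mirror the $0$-shifted symplectic reduction of \cite{HoffmanSjamaar} while carrying the extra $1$-form $\eta$ through each step. The argument naturally splits into a smooth-quotient layer, a Lie groupoid layer, and a descended cosymplectic layer, with the final verification that the reduced Lichnerowicz map is still a fiberwise quasi-isomorphism being the main technical point.

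First I would build the underlying smooth structure. The freeness of the $N$-action on $R_1$ together with the regularity hypotheses on the Lie $2$-group action (in particular the free $K$-action on $R_0$) makes the $N$-action on $R_1$ proper, so $R_1 \to R_1/N$ is a principal $N$-bundle. The diagonal $N$-action on $K \times R_1$ given by $n \cdot (k,p)=(k\partial(n)^{-1},n\cdot p)$ remains free and proper, hence $K\times_N R_1$ is a manifold. I would then endow $K\times_N R_1 \rightrightarrows R_0$ with source and target $\tilde s[k,p] = s_{R_1}(p)$, $\tilde t[k,p] = k\cdot t_{R_1}(p)$, multiplication inherited from $R_1$ twisted by the group law of $K$, and units $x\mapsto [e,u_{R_1}(x)]$. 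The map $\psi(p)=[e,p]$ is then a Lie groupoid morphism over $\textnormal{id}_{R_0}$ whose fibers coincide with the $K_1$-orbits, exhibiting $\psi$ as a principal $K_1$-bundle in the sense of \cite[s.~6.13]{HoffmanSjamaar}.

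Next I would descend the forms. Because $(R_1,\phi)$ lands inside the zero fiber $\mu^{-1}(0)$, the moment-map identities $d\mu_0^\xi = \iota_{\xi_M}\omega$ and $\eta(\xi_M) = 0$ force $\phi^\ast\omega$ and $\phi^\ast\eta$ to be horizontal and invariant along the fundamental directions coming from $\mathfrak{n}$; combined with their closedness, this is precisely the data needed to produce well-defined closed forms $\omega^{\textnormal{red}} \in \Omega^2_{\textnormal{bas}}(K\times_N R_1)$ and $\eta^{\textnormal{red}} \in \Omega^1_{\textnormal{bas}}(K\times_N R_1)$ satisfying $\psi^\ast \omega^{\textnormal{red}} = \phi^\ast \omega$ and $\psi^\ast \eta^{\textnormal{red}} = \phi^\ast \eta$. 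Non-vanishing of $\eta^{\textnormal{red}}$ follows from that of $\phi^\ast \eta$ together with Lemma~\ref{Lemma1}, which places the collapsed directions inside $\ker(\phi^\ast \eta)$.

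The main obstacle I anticipate is showing that the Lichnerowicz map $\flat^{\textnormal{red}}$ of $(\omega^{\textnormal{red}},\eta^{\textnormal{red}})$ induces a quasi-isomorphism of the complex \eqref{diag:cochain} at every $x \in R_0$. The strategy is to apply Morita invariance twice. On the one hand, $(\phi^\ast \omega, \phi^\ast \eta)$ already defines a $0$-shifted cosymplectic structure on $R_1 \rightrightarrows R_0$ by construction of the pair $(R_1,\phi)$ and Proposition~\ref{Prop: Morita invariance}. On the other hand, the principal $K_1$-bundle $\psi: R_1 \to K\times_N R_1$ is a Morita morphism of foliation groupoids, so a second application of Proposition~\ref{Prop: Morita invariance} transports the quasi-isomorphism property to $(\omega^{\textnormal{red}}, \eta^{\textnormal{red}})$. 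The delicate point is checking that the enlarged anchor $\rho^{\textnormal{red}}$ for $A_{K\times_N R_1}$, whose fiber is an extension of $A_{R_1}$ by a copy of $\mathfrak{k}/\mathfrak{n}$, continues to satisfy $\ker(\flat^{\textnormal{red}}) = \textnormal{im}(\rho^{\textnormal{red}})$; this ultimately reduces to the clean intersection between $\mathfrak{n}$-orbits and $\mathcal{F}_{\phi^\ast\flat}$-leaves that is built into the regularity of the Lie $2$-group action.
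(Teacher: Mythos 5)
Your construction of the smooth quotient, of the groupoid $K\times_N R_1\rr R_0$, of the morphism $\psi$, and the descent of the forms follows essentially the paper's route (which defers these steps to \cite[Lem.~9.1.3, Prop.~9.1.4]{HoffmanSjamaar}). The genuine gap is in your treatment of what you correctly identify as the main technical point, the quasi-isomorphism of $\flat^{\textnormal{red}}$: both of your applications of Morita invariance are invalid.

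First, $(\phi^\ast\omega,\phi^\ast\eta)$ is \emph{not} a $0$-shifted cosymplectic structure on $R_1\rr R_0$. By Proposition~\ref{Prop: Morita invariance} applied to the Morita map $\phi$, that would be equivalent to the zero fiber $\mu_1^{-1}(0)\rr\mu_0^{-1}(0)$ with the restricted forms being $0$-shifted cosymplectic, which fails in general: already in the classical case ($G=M\rr M$ a unit groupoid, $\omega$ symplectic, $K$ acting freely) the kernel of $\omega|_{\mu_0^{-1}(0)}$ contains the nonzero $K$-orbit directions while the anchor of the zero-fiber groupoid vanishes. This failure is precisely why one must enlarge $R_1$ to $K\times_N R_1$. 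Second, $\psi\colon R_1\to K\times_N R_1$ is a principal $K_1$-bundle but not a Morita morphism: it covers the identity on objects yet is not fully faithful (its image consists only of the classes $[e,p]$), and the two groupoids present different stacks --- the zero-fiber stack versus its quotient by $K_1$. Hence Proposition~\ref{Prop: Morita invariance} cannot transport the quasi-isomorphism along $\psi$. The correct argument is direct and uses the defining property of the pair $(R_1,\phi)$ recorded just before the proposition: the $K_0$-orbits of $R_0$ are the leaves of $\mathcal{F}_{\phi_0^\ast\flat}$. The anchor of $K\times_N R_1\rr R_0$ is injective (by regularity of the action) with image the tangent distribution to the $K_0$-orbits, so $\im(\rho^{\textnormal{red}})=\ker(\phi_0^\ast\flat)=\ker(\flat^{\textnormal{red}})$, which is exactly the quasi-isomorphism condition in \eqref{diag:cochain}. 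Relatedly, note that $\omega^{\textnormal{red}}=\phi_0^\ast\omega$ and $\eta^{\textnormal{red}}=\phi_0^\ast\eta$ are literally the same forms on $R_0$, only now required to be basic for the larger groupoid ($K$-invariance from connectedness of $K$ and the precosymplectic hypothesis, horizontality from the moment map identities on the zero fiber); nothing is collapsed, so the non-vanishing of $\eta^{\textnormal{red}}$ requires no appeal to Lemma~\ref{Lemma1}.
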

\begin{proof}
We will only provide a sketch of the proof, as it is completely similar to that Theorem 9.1.1 in \cite{HoffmanSjamaar}. On the one hand, the first assertion in the statement follows from \cite[Lem. 9.1.3]{HoffmanSjamaar} since the Lie 2-group action of $K_1$ on $R_1$ is regular. On the other hand, the manifold of arrows of the Lie groupoid $K\times_NR_1\rr R_0$ is defined as the associated bundle obtained out of the quotient of $K\times R_1$ by the action of $N$ given by $n\cdot (k,p)=(k\partial(n^{-1}),n\cdot p)$. The groupoid structure is set by  
$$s[k,p]=s(p),\qquad t[k,p]=k\cdot t(p),\qquad m([k,p],[k',p'])=[kk',m(k'^{-1}\cdot p,p')]$$
$$ u(p_0)=[e,u(x)], \qquad [k,p]^{-1}=[k^{-1},k\cdot p^{-1}].$$

Because $K$ is connected, the forms $\omega^{\textnormal{red}}=\phi_0^\ast \omega$ and $\eta^{\textnormal{red}}=\phi_0^\ast \eta$ are $K$-invariant. Since the action $N$ preserves the $s$-fibers, both $\omega^{\textnormal{red}}$ and $\eta^{\textnormal{red}}$ become basic forms on the Lie groupoid $K\times_N R_1$. Besides, the $K_0$-orbits of $R_0$ are the leaves of the foliation $F_{\phi_0^\ast \flat}$, so that the pair $(\omega^{\textnormal{red}},\eta^{\textnormal{red}})$ defines a 0-shifted cosymplectic structure on $K\times_N R_1$.

Let us define $\psi:R_1\to K\times_N R_1$ as $\psi_0=\textnormal{id}_{R_0}$ and $\psi_1(p)=[1_K,p]$. This is the Lie groupoid morphism mentioned in the statement with the desired properties, compare \cite[Prop. 9.1.4]{HoffmanSjamaar}.
\end{proof}

As in the symplectic case, this reduction procedure yields new insights even in the setting of cosymplectic reduction by an ordinary Lie group.

\begin{corollary}
Let $K$ be a Lie group and let $(M,\omega,\eta)$ be a cosymplectic manifold equipped with a Hamiltonian $K$-action and moment map $\mu: M\to \mathfrak{k}^\ast$. If $0$ is a regular value of $\mu$ then $\mu^{-1}(0)/K$ is a cosymplectic stack. Moreover:
\begin{itemize}
\item If the action of $K$ on $\mu^{-1}(0)$ is proper then $\mu^{-1}(0)/K$ is a cosymplectic orbifold.
\item If the action of $K$ on $\mu^{-1}(0)$ is free and proper then $\mu^{-1}(0)/K$ is a cosymplectic manifold.
\end{itemize}
\end{corollary}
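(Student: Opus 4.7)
The strategy is to reduce the statement to the tools developed in the previous section. First, pass to the level set $X_0=\mu^{-1}(0)$, which is a smooth submanifold since $0$ is a regular value of $\mu$, and set $\omega_0=\omega|_{X_0}$, $\eta_0=\eta|_{X_0}$. By equivariance of $\mu$, the group $K$ preserves $X_0$, and the moment map condition $d\mu^\xi=\iota_{\xi_M}\omega$ together with $\eta(\xi_M)=0$ shows that every fundamental vector field $\xi_M$ is tangent to $X_0$ and lies in $\ker(\omega_0)\cap\ker(\eta_0)$. For the converse inclusion, if $v\in T_xX_0$ annihilates both $\omega_0$ and $\eta_0$, then $\flat(v)$ annihilates $T_xX_0=\ker(d\mu_x)$, hence lies in the conormal spanned by the differentials $d\mu_x^\xi=\flat(\xi_M(x))$; injectivity of $\flat$ (the cosymplectic hypothesis on $M$) forces $v$ to be a linear combination of fundamental vectors. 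Since $0$ is a regular value, the infinitesimal action is injective at every point, so the $K$-action is locally free on $X_0$ and $(X_0,\omega_0,\eta_0)$ is a precosymplectic manifold whose characteristic foliation $\mathcal{F}_\flat$ coincides with the regular $K$-orbit foliation.

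Next, apply Example \ref{Ex:precosymplecticManifoldwithFoliaiton} to the action groupoid $G=K\ltimes X_0\rightrightarrows X_0$: local freeness makes $G$ a foliation groupoid integrating $T\mathcal{F}_\flat$, and the pair $(\omega_0,\eta_0)$ defines a $0$-shifted cosymplectic structure on $G$. Since $G$ presents the differentiable stack $[X_0/K]=\mu^{-1}(0)/K$, Morita invariance (Proposition \ref{Prop: Morita invariance}) equips $\mu^{-1}(0)/K$ with a well-defined cosymplectic stack structure, establishing the first assertion.

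For the two refinements, properness of the $K$-action on $X_0$ turns $K\ltimes X_0$ into a proper Lie groupoid; combined with local freeness, this forces the isotropy groups to be finite, so $G$ is Morita equivalent to a proper étale foliation groupoid and $\mu^{-1}(0)/K$ is a cosymplectic orbifold. If the action is moreover free, the quotient $X_0/K$ is itself a smooth manifold and $G$ is Morita equivalent to the unit groupoid over it. By Proposition \ref{Prop: Morita invariance}, the pair $(\omega_0,\eta_0)$ then descends to closed forms $(\omega_{\textnormal{red}},\eta_{\textnormal{red}})$ on $X_0/K$ whose associated Lichnerowicz map is a pointwise isomorphism, with $\eta_{\textnormal{red}}$ nowhere vanishing. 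A dimension count using that $\dim(X_0/K)=\dim M-2\dim K$ is odd, together with the injectivity of $\flat$, forces $\dim\ker(\omega_{\textnormal{red}})=1$ and the transversal decomposition $T(X_0/K)=\ker(\omega_{\textnormal{red}})\oplus\ker(\eta_{\textnormal{red}})$, confirming that this is a genuine cosymplectic manifold.

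The main technical point is expected to be the identification of $\mathcal{F}_\flat$ with the $K$-orbit foliation on $X_0$, where the cosymplectic (rather than merely presymplectic) hypothesis on $M$ enters crucially through the injectivity of $\flat$. Once this is in place, the construction of the stacky cosymplectic structure and the two refinements under properness follow from Example \ref{Ex:precosymplecticManifoldwithFoliaiton} and Proposition \ref{Prop: Morita invariance} in a routine manner.
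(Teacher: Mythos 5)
Your argument is correct. The paper itself offers no written proof: the corollary is presented as a direct specialization of the preceding stacky reduction proposition, taking the $0$-shifted cosymplectic groupoid to be the unit groupoid $M\rightrightarrows M$ and the foliation Lie $2$-group to be the ordinary group $K\rightrightarrows K$ (so $N$ is trivial and $\mathfrak{n}=0$), after which the general machinery produces the quotient stack $[\mu^{-1}(0)/K]$ with its reduced pair of forms. You instead rebuild the special case from scratch: you restrict $(\omega,\eta)$ to the level set, prove that $\ker(\omega_0)\cap\ker(\eta_0)$ equals the orbit distribution by pushing the conormal computation through the \emph{invertibility} of $\flat$ on $M$, observe that regularity of $0$ forces local freeness, and then feed the locally free action groupoid $K\ltimes X_0$ into Example \ref{Ex:precosymplecticManifoldwithFoliaiton} and Proposition \ref{Prop: Morita invariance}. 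The two routes are equivalent in substance, but yours has the merit of making explicit the one computation the paper leaves implicit --- the identification of $\mathcal{F}_\flat$ on $X_0$ with the $K$-orbit foliation, which is exactly where the cosymplectic (as opposed to presymplectic) hypothesis is used --- and your closing dimension count cleanly justifies why the reduced pair in the free and proper case is genuinely cosymplectic rather than merely precosymplectic with trivial null foliation. Two small points worth adding for completeness: to conclude that $(X_0,\omega_0,\eta_0)$ is precosymplectic in the sense of the paper you should note that the Reeb vector field of $M$ is tangent to $X_0$ and witnesses the strict inclusion $\ker(\omega_0)\cap\ker(\eta_0)\subsetneq\ker(\omega_0)$; and in the orbifold case one should say that properness of the action makes $K\ltimes X_0$ a proper foliation groupoid, hence Morita equivalent to a proper \'etale groupoid, before transporting the structure by Proposition \ref{Prop: Morita invariance}. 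Neither is a gap, just a line each.
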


One can also establish a cosymplectic version of the so-called Kirwan convexity theorem in this context \cite{Kirwan}. Let us further assume that $K_1\rr K_0$ is of \emph{compact type}, the latter meaning that the Lie algebra $\mathfrak{k}_0$ is a compact Lie algebra and the orbit space $K_0/K_1$ is compact. A \emph{maximal $2$-torus} of $K_1$ is a full Lie $2$-subgroup $T_1\rr T_0$ of $K_1\rr K_0$ such that $T_0$ is connected and $\textnormal{Lie}(T_0)=\mathfrak{t}_0$ is a maximal abelian subalgebra of $\mathfrak{k}_0$. Maximal $2$-torus are also uniquely determined up to categorical conjugation \cite[s. 6.12]{HoffmanSjamaar}. It follows that the corresponding quotient Lie algebra $\mathfrak{t}/\mathfrak{a}$ is in a natural way a direct summand of $\mathfrak{k}/\mathfrak{n}$, so we can identify $(\mathfrak{t}/\mathfrak{a})^\ast$ with a subspace of $(\mathfrak{k}/\mathfrak{n})^\ast$. The \emph{Weyl group} of $K_1$ relative to $T_1$ is by definition the Weyl group of the pair $(\mathfrak{k}/\mathfrak{n},\mathfrak{t}/\mathfrak{a})$. Let us choose a closed Weyl chamber $C$ for the Weyl action on $(\mathfrak{t}/\mathfrak{a})^\ast$. We define the \emph{moment map image} by $\mu(G)=\mu_1(G)=\mu_0(M)$ and its \emph{moment body} by $\triangle(G)=\mu(G)\cap C$. 

As an immediate consequence of Proposition \ref{prop:MomentBodyMorseBott} and \cite[Prop. 7.4.1]{HoffmanSjamaar} we deduce that:

\begin{proposition}
If the action of $K_0$ on $M$ is clean then moment body $\triangle(G)$ is a closed convex polyhedron.
\end{proposition}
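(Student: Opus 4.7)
The plan is to derive this proposition as a direct application of Proposition \ref{prop:MomentBodyMorseBott} once the Lie 2-group data has been reinterpreted in terms of ordinary Lie group data. By the definition of the moment body, $\triangle(G) = \mu_0(M) \cap C$, where $\mu_0 \colon M \to (\mathfrak{k}/\mathfrak{n})^{\ast} \cong \mathfrak{n}^{\circ} \subseteq \mathfrak{k}^{\ast}$ is the base component of the moment map morphism $\mu$. The Hamiltonian hypothesis on the 2-group action restricts so that $(M,\omega,\eta)$ becomes a precosymplectic Hamiltonian $K_0$-manifold with moment map $\mu_0$ in the sense of Definition \ref{def:HamiltonianPrecocymplectic}; in particular $\mu_0$ is $\textnormal{Ad}^{\ast}$-equivariant and satisfies $\eta(\xi_M)=0$ and $d\mu_0^{\xi} = \iota_{\xi_M}\omega$ for every $\xi \in \mathfrak{k}$.

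Next, I would verify that the hypotheses of Proposition \ref{prop:MomentBodyMorseBott} hold for this $K_0$-action. The compact-type assumption on $K_1 \rr K_0$ ensures that $\mathfrak{k}_0$ is a compact Lie algebra, which combined with the compactness of $K_0/K_1$ and \cite[Prop.~7.4.1]{HoffmanSjamaar} allows one to replace $K_0$ by a compact connected Lie group acting with the same orbits and moment body, and to deduce the properness of $\mu_0$ needed to invoke the convexity statement (connectedness of $M$ is assumed without loss of generality). The cleanness hypothesis on the $K_0$-action is exactly the one required. Moreover, the maximal $2$-torus $T_1 \rr T_0$, its quotient pair $(\mathfrak{k}/\mathfrak{n},\mathfrak{t}/\mathfrak{a})$, and the chamber $C \subset (\mathfrak{t}/\mathfrak{a})^{\ast}$ correspond, under pullback along $\pi \colon \mathfrak{k} \to \mathfrak{k}/\mathfrak{n}$, to a maximal torus of the reduced compact group and a closed Weyl chamber in the sense required by Proposition \ref{prop:MomentBodyMorseBott}, since by definition the Weyl group of $K_1$ relative to $T_1$ is the Weyl group of $(\mathfrak{k}/\mathfrak{n},\mathfrak{t}/\mathfrak{a})$.

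With these identifications in place, Proposition \ref{prop:MomentBodyMorseBott} yields that $\mu_0(M) \cap C$ is a closed convex polyhedral set, which is exactly the claim that $\triangle(G)$ is a closed convex polyhedron. The main technical obstacle, and the reason \cite[Prop.~7.4.1]{HoffmanSjamaar} is invoked, is the compatibility between the Weyl-theoretic data attached to the foliation Lie 2-group $K_1 \rr K_0$ (built from the compact-type pair $(\mathfrak{k}/\mathfrak{n},\mathfrak{t}/\mathfrak{a})$) and the Weyl-theoretic data of the ordinary compact Lie group action appearing in the precosymplectic Lin--Sjamaar convexity theorem; once this compatibility is established, the conclusion is immediate.
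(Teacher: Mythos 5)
Your argument matches the paper's: the proposition is stated there as an immediate consequence of Proposition \ref{prop:MomentBodyMorseBott} together with \cite[Prop.~7.4.1]{HoffmanSjamaar}, which is exactly the reduction to the base $K_0$-action plus the Weyl-data compatibility that you describe. The only caveat is that properness of $\mu_0$ and connectedness of $M$ are genuine standing hypotheses of Proposition \ref{prop:MomentBodyMorseBott} rather than consequences of the compact-type assumption, so they should be carried along explicitly rather than ``deduced''; the paper's own statement leaves them implicit as well.
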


We conclude by noting that, under the same assumptions, the component functions of the moment map morphism define Morse–Bott Lie groupoid morphisms in the sense of \cite{OrtizValencia}. To make this assertion precise, we define a Lie groupoid morphism $F:(G\rr M)\to (\mathbb{R}\rr \mathbb{R})$ to be of \emph{Morse--Bott type} if the basic function $f:M\to \mathbb{R}$ that characterizes it is a Morse--Bott function. In this case, the critical point set $\textnormal{Crit}(f)$ is saturated, so that each of its connected components contains critical groupoid orbits. Hence, as another immediate consequence of Proposition \ref{prop:MomentBodyMorseBott} and \cite[Prop. 4.18]{OrtizValencia} we obtain that:

\begin{proposition}
If the induced action of $T_0$ on $M$ is clean then the component Lie groupoid morphism $\mu^\xi: (G\rr M)\to (\mathbb{R}\rr \mathbb{R})$ is of Morse--Bott type for all $\xi\in \mathfrak{t}/\mathfrak{a}$. Moreover, the index at every non-degenerate critical submanifold is even.
\end{proposition}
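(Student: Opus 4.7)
The plan is to reduce the statement to the scalar Morse--Bott assertion already established in Proposition~\ref{prop:MomentBodyMorseBott} and then repackage it through the definition of a Morse--Bott Lie groupoid morphism recalled immediately before the statement. By that definition, showing that $\mu^\xi\colon (G\rr M)\to (\mathbb{R}\rr \mathbb{R})$ is of Morse--Bott type amounts to showing that its underlying basic function $\mu_0^\xi\colon M\to \mathbb{R}$ is a Morse--Bott function all of whose non-degenerate critical submanifolds have even index.

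First I would apply Proposition~\ref{prop:MomentBodyMorseBott} to the induced action of $T_0$ on $(M,\omega,\eta)$, checking that the hypotheses transfer from the Lie $2$-group action of $K_1\rr K_0$ to the ordinary $T_0$-action. The group $T_0$ is a compact connected torus because $K_1\rr K_0$ is of compact type and $T_0$ is the connected integral subgroup of $K_0$ whose Lie algebra is maximal abelian in the compact Lie algebra $\mathfrak{k}_0$. The restriction of the $K_0$-action to $T_0$ is precosymplectic and Hamiltonian with moment map obtained by composing $\mu_0$ with the projection onto $(\mathfrak{t}_0/\mathfrak{a})^\ast \hookrightarrow (\mathfrak{k}/\mathfrak{n})^\ast$, since the identities $\eta(\xi_M)=0$ and $d\mu_0^\xi = \iota_{\xi_M}\omega$ already hold for every $\bar\xi\in \mathfrak{k}/\mathfrak{n}$ and the $\textnormal{Ad}^\ast$-equivariance descends to $T_0$. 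Finally, the $T_0$-action is clean by hypothesis. The second bullet of Proposition~\ref{prop:MomentBodyMorseBott} then yields exactly what I need: for each $\xi\in \mathfrak{t}_0/\mathfrak{a}$, the function $\mu_0^\xi$ is Morse--Bott and every connected component of $\textnormal{Crit}(\mu_0^\xi)$ has even index.

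Second, I would invoke \cite[Prop.~4.18]{OrtizValencia} together with the basic-function characterization of $\mu^\xi$ highlighted in the paragraph preceding the statement in order to promote the scalar Morse--Bott data to a Morse--Bott property for the Lie groupoid morphism. Saturation of $\textnormal{Crit}(\mu_0^\xi)$ under the $G$-action is automatic from basicness: if $d\mu_0^\xi$ vanishes at $x\in M$, then since $\mu_0^\xi$ is constant along the $G$-orbit through $x$ its differential vanishes along that entire orbit, so every connected component of $\textnormal{Crit}(\mu_0^\xi)$ is a union of $G$-orbits and contains critical groupoid orbits in the sense defined above the statement. Combined with the even-parity conclusion of the previous step, this yields that $\mu^\xi$ is of Morse--Bott type with even index at every non-degenerate critical submanifold.

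The main obstacle I anticipate is purely bookkeeping: one must confirm that $\mu_0^\xi$ is well-defined as a function on $M$ for classes $\bar\xi\in \mathfrak{t}_0/\mathfrak{a}$, and that restricting a Hamiltonian action of the foliation $2$-group $K_1\rr K_0$ to the maximal $2$-torus $T_1\rr T_0$ actually produces a Hamiltonian $T_0$-action in the sense of Definition~\ref{def:HamiltonianPrecocymplectic} which still satisfies the cleanness hypothesis of Proposition~\ref{prop:MomentBodyMorseBott}. Once these identifications are installed using the direct-summand decomposition $(\mathfrak{t}/\mathfrak{a})^\ast\subseteq (\mathfrak{k}/\mathfrak{n})^\ast$ and the vanishing of the fundamental vector fields associated with $\mathfrak{a}$, the conclusion follows as an immediate combination of the two cited results, matching the author's remark.
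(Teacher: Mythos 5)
Your proposal is correct and follows essentially the same route as the paper: the authors state this proposition as an immediate consequence of Proposition~\ref{prop:MomentBodyMorseBott} (applied to the clean Hamiltonian precosymplectic action of the torus $T_0$) together with \cite[Prop.~4.18]{OrtizValencia}, which is exactly the two-step reduction you carry out. The bookkeeping you supply --- restricting the $2$-group action to the maximal $2$-torus, identifying $(\mathfrak{t}/\mathfrak{a})^\ast$ inside $(\mathfrak{k}/\mathfrak{n})^\ast$, and noting that basicness of $\mu_0^\xi$ makes $\textnormal{Crit}(\mu_0^\xi)$ saturated --- is precisely the content the paper leaves implicit.
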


It is worth stressing that all these features can be used to provide a sort of Delzant's classification for what we would call a toric cosymplectic stack. This can be done by following the references \cite{BazzoniGoertsches2,Hoffman} in order to classify simple convex polytopes equipped with some additional combinatorial data.


\begin{thebibliography}{WWW}
\bibitem{Albert} C. Albert: \emph{Le th\'{e}or\`eme de r\'{e}duction de {M}arsden-{W}einstein en g\'{e}om\'{e}trie cosymplectique et de contact}, J. Geom. Phys., {\bf 6} (1989) no. 4, 627--649.
	
\bibitem{BazzoniGoertsches} G. Bazzoni, O. Goertsches: \emph{{$K$}-cosymplectic manifolds}, Ann. Global Anal. Geom., {\bf 47}  (2015) no. 3, 239--270.

\bibitem{BazzoniGoertsches2} G. Bazzoni, O. Goertsches: \emph{Toric actions 
in cosymplectic geometry}, Forum Math., {\bf 31}  (2019) no. 4, 907--915.

\bibitem{Bursztyn} H. Bursztyn: \emph{A brief introduction to {D}irac manifolds}, Geometric and topological methods for quantum field theory,  Cambridge Univ. Press, Cambridge, (2013), 4--38.

\bibitem{BursztynNosedaZhu} H. Bursztyn, F. Noseda, C. Zhu: \emph{Principal actions of stacky {L}ie groupoids}, Int. Math. Res. Not. IMRN,  (2020) no. 16, 5055--5125.

\bibitem{ChineaDeLeonMarrero} D. Chinea, M. de Le\'{o}n, J. C. Marrero: \emph{The constraint algorithm for time-dependent {L}agrangians}, J. Math. Phys., {\bf 35}  (1994) no. 7, 3410--3447.

\bibitem{CrainicMoerdijk} M. Crainic, I. Moerdijk: \emph{Foliation groupoids and their cyclic homology,} Adv. Math., {\bf 157} (2001) no. 2, 177--197.

\bibitem{CuecaZhu} M. Cueca, C. Zhu: \emph{Shifted symplectic higher {L}ie groupoids and classifying spaces,} Adv. Math., {\bf 413} (2023), Paper No. 108829, 64.

\bibitem{delHoyo} M. del Hoyo: \emph{Lie groupoids and their orbispaces}, Port. Math., {\bf 70} (2013) no. 3, 161--209.


\bibitem{deLucasRivasVilarinoZawora} J. de Lucas, X. Rivas, S. Vilari\~{n}o, B.-M. Zawora: \emph{On {$k$}-polycosymplectic {M}arsden-{W}einstein reductions}, J. Geom. Phys., {\bf 191} (2023), Paper No. 104899, 36.
	
\bibitem{FernandesIglesias} R.-L. Fernandes, D. Iglesias-Ponte: {\emph Cosymplectic groupoids}, J. Geom. Phys., {\bf 192} (2023), Paper No. 104928, 20.

\bibitem{GraciaDeLucasRivasRoman-Roy} X. Gr\`acia, J. de Lucas, X. Rivas, N.
Rom\'{a}n-Roy: \emph{On {D}arboux theorems for geometric structures induced by closed forms}, Rev. R. Acad. Cienc. Exactas F\'{\i}s. Nat. Ser. A Mat. RACSAM, {\bf 118} (2024) no. 3, Paper No. 131, 35.

\bibitem{GuzmanMarrero} E. Guzm\'{a}n, J.-C. Marrero: \emph{Time-dependent mechanics and {L}agrangian submanifolds of presymplectic and {P}oisson manifolds}, J. Phys. A, {\bf 43} (2010) no. 50, 505201, 23.

\bibitem{Hoffman} B. Hoffman: \emph{Toric symplectic stacks}, Adv. Math., {\bf 368} (2020), 107135, 43.
	
\bibitem{HoffmanSjamaar} B. Hoffman, R. Sjamaar: \emph{Stacky {H}amiltonian actions and symplectic reduction}, Int. Math. Res. Not. IMRN,  (2021) no. 20, 15209--15300.

\bibitem{Kirwan} F. Kirwan: \emph{Convexity properties of the moment mapping. {III}}, Invent. Math., {\bf 77}  (1984) no. 3, 547--552.

\bibitem{Li} H. Li: \emph{Topology of co-symplectic/co-{K}\"{a}hler manifolds}, Asian J. Math., {\bf 12} (2008) no. 4, 527--543.

\bibitem{Libermann1} P. Libermann: \emph{Sur les automorphismes infinit\'{e}simaux des structures symplectiques et des structures de contact}, Colloque {G}\'{e}om. {D}iff. {G}lobale ({B}ruxelles, 1958), Librairie Universitaire, Louvain, (1959), 37--59.


\bibitem{Libermann2} P. Libermann: \emph{Sur quelques exemples de structures pfaffiennes et presque cosymplectiques}, Ann. Mat. Pura Appl. (4), {\bf 60} (1962), 153--172.

\bibitem{LinSjamaar} Y. Lin, R. Sjamaar:  {\it Convexity properties of presymplectic moment maps}, J. Symplectic Geom.,  17 (4) (2019), 1159--1200.

\bibitem{LopezGarciaMartinez} D. L\'opez-Garcia, N. Martinez-Alba: \emph{Reduction of cosymplectic groupoids by cosymplectic moment maps}, Lett. Math. Phys., {\bf 115} (2025) no. 6, Paper No. 129, 24.

\bibitem{MaglioTortorellaVitagliano} A. Maglio, A.-G. Tortorella, L. Vitagliano: \emph{Shifted contact structures on differentiable stacks}, Int. Math. Res. Not. IMRN, (2024) no. 16, 11933--11976.


\bibitem{Maglio} A. Maglio: \emph{Shifted Contact Structures on Differentiable Stacks }, Ph.D. Thesis, University of Salerno, arXiv:2503.24238, (2025).

\bibitem{OrtizValencia} C. Ortiz, F. Valencia: \emph{Morse theory on {L}ie groupoids}, Math. Z., {\bf 307} 46 (2024).

\bibitem{PflaumPosthumaTang} M. J. Pflaum, H. Posthuma, X. Tang: \emph{Geometry of orbit spaces of proper {L}ie groupoids}, J. Reine Angew. Math., {\bf 694} (2014), 49--84.



	

\end{thebibliography}
\end{document}